\newtheorem{Theorem}{Theorem}[section]
\newtheorem{Corollary}{Corollary}[section]
\newtheorem{Definition}{Definition}[section]
\newtheorem*{Definition*}{Definition}
\newtheorem{Example}{Example}[section]
\newtheorem{Lemma}{Lemma}[section]
\newtheorem{Proposition}{Proposition}[section]
\newtheorem{Remark}{Remark}[section]
\numberwithin{equation}{section}
\begin{document}
	
	%-------------------------------------------------------------------------
	% editorial commands: to be inserted by the editorial office
	%
	%\firstpage{1} \volume{228} \Copyrightyear{2004} \DOI{003-0001}
	%
	%
	%\seriesextra{Just an add-on}
	%\seriesextraline{This is the Concrete Title of this Book\br H.E. R and S.T.C. W, Eds.}
	%
	% for journals:
	%
	%\firstpage{1}
	%\issuenumber{1}
	%\Volumeandyear{1 (2004)}
	%\Copyrightyear{2004}
	%\DOI{003-xxxx-y}
	%\Signet
	%\commby{inhouse}
	%\submitted{March 14, 2003}
	%\received{March 16, 2000}
	%\revised{June 1, 2000}
	%\accepted{July 22, 2000}
	%
	%
	%
	%---------------------------------------------------------------------------
	%Insert here the title, affiliations and abstract:
	%

	\title[A new geometric constant to compare $p$-angular and skew $p$-angular distances ]
	{A new geometric constant to compare \\ $p$-angular and skew $p$-angular distances  }
	
	\author[Yuxin Wang]{Yuxin Wang}

\address{%
School of Mathematics and physics\\
Anqing Normal University\\
Anqing 246133\\
China}

\email{y24060028@stu.aqnu.edu.cn}

\thanks{This work was completed with the support of Anhui Province Higher Education Science Research Project (Natural Science). 2023AH050487.}
%----------Author 2
\author{Qi Liu$^*$}
\address{School of Mathematics and physics\br
Anqing Normal University\br
Anqing 246133\br
China}
\address{ International Joint Research Center of Simulation and Control for
	Population Ecology of Yantze River in Anhui\br
	Anqing Normal University\br
	Anqing 246133\br
	China}
\email{liuq67@aqnu.edu.cn}

\author{Jinyu Xia}
\address{School of Mathematics and physics\br
	Anqing Normal University\br
	Anqing 246133\br
	China}
	\email{y23060036@aqnu.edu.cn}
	
	\author{Muhammad Sarfraz}
	\address{School of Mathematics and System Sciences\br
		 Xinjiang University\br
		 Urumgi 830046\br
		China}
	\email{sarfraz@xju.edu.cn}
%----------classification, keywords, date
\subjclass{46B20}

\keywords{$p$-angular distance, skew $p$-angular distance, uniformly non-squareness, uniform convexity, uniform smoothness, normal structure}

	\date{---}
	%----------additions
	\dedicatory{---}
	%%% ----------------------------------------------------------------------
	
	\begin{abstract}
		The $p$-angular distance was first introduced by Maligranda in 2006, while the skew $p$-angular distance was first introduced by Rooin in 2018. In this paper, we shall introduce a new geometric constant named Maligranda-Rooin constant in Banach spaces to compare $p$-angular distance and skew $p$-angular distance. We denote the Maligranda-Rooin constant as $\mathcal{MR}_p(\mathcal{X})$. First, the upper and lower bounds for the $\mathcal{MR}_p(\mathcal{X})$ constant is given. Next, it's shown that, a normed linear space is an inner space if and only if $\mathcal{MR}_p(\mathcal{X})=1$. Moreover,  an equivalent form of this new constant is established. By means of the $\mathcal{MR}_p(\mathcal{X})$ constant, we carry out the quantification of the characterization of uniform non-squareness. Finally, we study the relationship between the $\mathcal{MR}_p(\mathcal{X})$ constant, uniform convexity, uniform smooth and normal structure.
	\end{abstract}
	
	%%% ----------------------------------------------------------------------
	\maketitle
	%%% ----------------------------------------------------------------------
	%\tableofcontents
	\section{Introduction}
Throughout this article, we always regarded $\mathcal{X}$ be a real Banach space with $\operatorname{dim}$$\mathcal{X} \geq 2.$ The unit ball and
unit sphere of $\mathcal{X}$ are denoted by $\mathcal{B_X}$  and $S_X$, respectively.

Clarkson \cite{05}  introduced the concept of angular distance in a normed space in 1936:
$$\mathcal{\alpha}[x_1,x_2]=\left\|\frac{x_1}{\|x_1\|}-\frac{x_2}{\|x_2\|}\right\|, \quad \forall x_1, x_2 \in\{\mathcal{X},\|\cdot\|\}.$$
In 1964, Dunkl and Williams \cite{06} has shown that, $\forall x_1, x_2 \in\{\mathcal{X},\|\cdot\|\}$, the inequality
$$
\left\|\frac{x_1}{\|x_1\|} -\frac{x_2}{\|x_2\|} \right\| \leq \frac{4\|x_1-x_2\|}{\|x_1\|+\|x_2\|}
$$
holds for all  $x_1, x_2 \neq 0$. Moreover, they have proved that if $\mathcal{X}$ is a Hilbert space, then the above inequality is transformed into the following form:
$$
\left\|\frac{x_1}{\|x_1\|} -\frac{x_2}{\|x_2\|} \right\| \leq \frac{2\|x_1-x_2\|}{\|x_1\|+\|x_2\|}.
$$
Thus, in 2008, Jiménez-Melado \cite{07} et al defined the following constant to characterize Hilbert space:$$\mathcal{DW}(\mathcal{X})=\sup\bigg\{\frac{\|x_1\|+\|x_2\|}{\|x_1-x_2\|}\bigg\|\frac{x_1}{\|x_1\|}-\frac{x_2}{\|x_2\|}\bigg\|, x_1, x_2\in\mathcal{X}\backslash\{0\}, x_1\neq x_2\bigg\}.$$In the same paper, the authors introduced some properties of the $\mathcal{DW}(\mathcal{X})$ constant, we collect some of them:

(i) $2\leq\mathcal{DW}(\mathcal{X})\leq 4$, $\mathcal{DW}(\mathcal{X})=2$ if and only if $\mathcal{X}$ is a Hilbert space;

(ii) $\mathcal{X}$ is uniformly non-square if and only if $\mathcal{DW}(\mathcal{X})<4$;

(iii) If $\mathcal{DW}(\mathcal{X})<\sqrt[3]{3+2 \sqrt{2}}+\sqrt[3]{3-2 \sqrt{2}}$, then $X$ has normal structure.
\\For more details of the $\mathcal{DW}(\mathcal{X})$  constant, I suggest readers refer to \cite{07,08,09}.

On the basis of angular distance, in 2013, Dehghan \cite{10} introduced the concept of skew angular distance:$$\mathcal{\beta}[x_1,x_2]=\left\|\frac{x_2}{\|x_1\|}-\frac{x_1}{\|x_2\|}\right\|, \quad \forall x_1, x_2 \in\{\mathcal{X},\|\cdot\|\}.$$
In 2024, Fu et al \cite{11} relate the angular distance to the skew angular distance and establish the following constant:$$\mathcal{D R(X)}=\sup \left\{\frac{\alpha[x_1, x_2]}{\beta[x_1, x_2]}: x_1, x_2 \in \mathcal{X} \backslash\{0\}, \beta[x_1, x_2)] \neq 0\right\}.$$In the same paper, the authors introduced some properties of the $\mathcal{DR}(\mathcal{X})$ constant, we collect some of them:

(i) $1\leq\mathcal{DR}(\mathcal{X})\leq 2$, $\mathcal{DR}(\mathcal{X})=1$ if and only if $\mathcal{X}$ is a Hilbert space;

(ii) $\mathcal{X}$ is uniformly non-square if and only if $\mathcal{DR}(\mathcal{X})<2$;

(iii) If $\mathcal{DR}(\mathcal{X})< 1.134$, then $X$ has normal structure.
\\In 2016,  the authors \cite{02} generalized angular distance, and introduced $ p $-angular distance:
for any $p\in \mathcal{R}$, $$\mathcal{\alpha}_p \left[x_1,x_2\right]=\bigg\|\frac{x_1}{\|x_1\|^{1-p}}-\frac{x_2}{\|x_2\|^{1-p}}\bigg\|.$$ 
For more details of the $p$-angular distance, I suggest readers refer to \cite{12,13}.
\\Several years later, Jamal et al \cite{01} introduced the following skew $p$-angular distance: $$\mathcal{\beta}_p \left[x_1,x_2\right]=\bigg\|\frac{x_1}{\|x_2\|^{1-p}}-\frac{x_2}{\|x_1\|^{1-p}}\bigg\|.$$
Additionally, the authors \cite{01} has shown that a normed space $\mathcal{X}$ is
an inner product space, if and only if for any $p\leq1$ and $x_1,x_2\in \mathcal{X}\backslash\{0\}$, $\mathcal{\alpha}_{p}[x_1,x_2]\leq\mathcal{\beta}_{p}[x_1,x_2]$. 

In this paper, we are devoted to introduce a new geometric constant named Maligranda-Rooin constant to compare $p$-angular and skew $p$-angular distances. The arrangement of this article is as follows:

In the second part, we will review some basic concepts that are useful for the following parts.

In the third part, we gave the main definition of the $	\mathcal{MR}_p(\mathcal{X})$ constant, calculated its upper and lower bounds, as well as an equivalent form of it. In addition, we proved that a normed linear space is a Banach space if and only if $\mathcal{MR}_p(\mathcal{X})=1$.

In the last part, we consider  some relations with the Maligranda-Rooin constant and other geometric properties, including  uniform convexity, uniform smoothness and normal structure.
\section{Notations and Preliminaries}
\begin{Definition}\cite{17} A Banach space $\mathcal{X}$ is uniformly non-square if there exists  $\delta \in(0,1)$ such that for any pair $x_1, x_2 \in \mathcal{S_X}$, either $\|x_1+x_2\| \leq \delta$ or $\|x_1-x_2\| \leq \delta$.
\end{Definition}
\begin{Definition}\cite{18}
	The modulus of convexity $\delta_{\mathcal{X}}(\epsilon)$ of $\mathcal{X}$ is defined as
	$$
	\delta_{\mathcal{X}}(\epsilon)=\inf \left\{1-\left\|\frac{1}{2}(x_1+x_2)\right\|: x_1, x_2 \in \mathcal{B_X},\|x_1-x_2\| \geq \epsilon\right\}.
	$$
	The characteristic of
	convexity of $\mathcal{X}$ is defined as the number$$
	\epsilon_0(\mathcal{X})=\sup \left\{\epsilon \in[0,2]: \delta_\mathcal{X}(\epsilon)=0\right\} .
	$$
	
\end{Definition}
\begin{Definition}\cite{01}
	For $\mathcal{\tau}>0$, the module of smoothness $\mathcal{\rho}_\mathcal{X}(\mathcal{\tau})$ is defined as	
	$$
	\rho_\mathcal{X}(\mathcal{\tau})=\sup \left\{\frac{\|x_1+\mathcal{\tau}x_2\|+\|x_1-\mathcal{\tau} x_2\|}{2}-1: x_1, x_2 \in \mathcal{S}_\mathcal{X}\right\}.
	$$
	We say that $\mathcal{X}$ is uniformly smooth if $
	\rho'_\mathcal{X}(0)=\lim _{\tau \rightarrow 0^{+}} \frac{\rho_\mathcal{X}(\tau)}{\tau}=0
	$. We then say that $(\mathcal{X}, \|\cdot\|)$ is uniformly smooth.
\end{Definition}
\begin{Definition}\cite{14}
	We say that a Banach space $(\mathcal{X},\|\cdot\|)$ has normal structure \cite{14}, if for every nonempty, bounded, closed convex subset $\widetilde{\mathcal{X}}$ of $\mathcal{X}$,
	$$
	r(\widetilde{\mathcal{X}})=\inf _{x_2 \in \widetilde{\mathcal{X}}} \sup _{x_1 \in \widetilde{\mathcal{X}}}\|x_1-x_2\|<\operatorname{diam}(\widetilde{\mathcal{X}})=\sup _{x_1, x_2 \in \widetilde{\mathcal{X}}}\|x_1-x_2\| .
	$$	
\end{Definition}
\begin{Definition}\cite{15}
	The coefficient $\mu(\mathcal{X}) \in[1,3]$ was defined in by
	$$
	\mu(\mathcal{X})=\inf \left\{a>0: \lim \sup \left\|x_n+x_1\right\| \leq a \lim \sup \left\|x_n-x_1\right\|, x_n \stackrel{w}{\rightharpoonup} 0, x_1 \in \mathcal{X}\right\}.
	$$
\end{Definition}
\begin{Definition}\cite{16}
	The coefficient $\mathcal{M(X)}$ was introduced by T. Domínguez-Benavides  as
	$$
	\mathcal{M(X)}=\sup \left\{\frac{1+a}{\mathcal{R}(a, \mathcal{X})}: a \geq 0\right\},
	$$
	with
	$$
	\mathcal{R}(a, \mathcal{X})=\sup \left\{\liminf \left\|x_1+x_n\right\|\right\}
	$$where the supremum is taken over all $x_1 \in\mathcal{X}$ with $\|x_1\| \leq a$ and all weakly null sequences $\{x_n\}_{n=1}^{\infty}\in \mathcal{B}_\mathcal{X}$ such that
	$$
	\mathcal{D}\left(x_n\right)=\limsup _n\left(\limsup _m\left\|x_n-x_m\right\|\right) \leq 1.
	$$
	
\end{Definition}
\section{ The Maligranda-Rooin constant}
\begin{Definition}
	We define the following new constant: $$\begin{aligned}
		\mathcal{MR}_p(\mathcal{X})&=\sup\left\{\frac{\mathcal{\alpha}_p \left[x_1,x_2\right]}{\mathcal{\beta}_p \left[x_1,x_2\right]},~x_1,x_2\in \mathcal{X}\backslash\{0\},~\beta_p \left[x_1,x_2\right]\neq0\right\}\\&=\sup\left\{\frac{\bigg\|\frac{x_1}{\|x_1\|^{1-p}}-\frac{x_2}{\|x_2\|^{1-p}}\bigg\|}{\bigg\|\frac{x_1}{\|x_2\|^{1-p}}-\frac{x_2}{\|x_1\|^{1-p}}\bigg\|},~x_1,x_2\in \mathcal{X}\backslash\{0\},~\beta_p \left[x_1,x_2\right]\neq0\right\},
	\end{aligned}$$where $0\leq p\leq1$.
\end{Definition}
\begin{Remark}(i)If $p=0$, then $$	\mathcal{MR}_p(\mathcal{X})=\mathcal{DR}(\mathcal{X})=\sup\left\{\frac{\bigg\|\frac{x_1}{\|x_1\|}-\frac{x_2}{\|x_2\|}\bigg\|}{\bigg\|\frac{x_1}{\|x_2\|}-\frac{x_2}{\|x_1\|}\bigg\|},~x_1,x_2\in \mathcal{X}\backslash\{0\},~\beta \left[x_1,x_2\right]\neq0\right\}.$$
	(ii)If $p=1$, then for any Banach space $\mathcal{X}$, $\mathcal{MR}_p(\mathcal{X})=1$.
\end{Remark}
Now we will establish an equivalent form of the $\mathcal{MR}_p(\mathcal{X})$ constant.
\begin{Proposition}
	Let $\mathcal{X}$ be a Banach space. Then, $$\mathcal{MR}_p(\mathcal{X})=\sup\left\{\frac{\|\lambda x_1-\lambda^{p-1}x_2\|}{\|\lambda^{2-p}x_1-x_2\|},~x_1,x_2\in S_\mathcal{X},\lambda>0,\lambda \neq 1\right\},$$ where $0\leq p\leq1$.
\end{Proposition}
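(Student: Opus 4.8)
The idea is to start from the supremum defining $\mathcal{MR}_p(\mathcal{X})$ and reduce the two free vectors $x_1, x_2 \in \mathcal{X}\setminus\{0\}$ to unit vectors together with a single positive scalar parameter $\lambda$. Given $x_1, x_2 \in \mathcal{X}\setminus\{0\}$ with $\beta_p[x_1,x_2]\neq 0$, write $u = x_1/\|x_1\|$ and $v = x_2/\|x_2\|$, so $u, v \in S_\mathcal{X}$, and set $\lambda = \|x_1\|/\|x_2\| > 0$. The first step is to substitute $x_1 = \|x_1\| u$, $x_2 = \|x_2\| v$ into both the numerator $\alpha_p[x_1,x_2] = \big\| \|x_1\|^p u - \|x_2\|^p v \big\|$ and the denominator $\beta_p[x_1,x_2] = \big\| \|x_1\|/\|x_2\|^{1-p}\, u - \|x_2\|/\|x_1\|^{1-p}\, v \big\|$, then factor out a common power of $\|x_2\|$ (namely $\|x_2\|^p$ from the numerator and $\|x_2\|^p$ from the denominator after rewriting $\|x_1\|/\|x_2\|^{1-p} = \|x_2\|^p (\|x_1\|/\|x_2\|)$ and $\|x_2\|/\|x_1\|^{1-p} = \|x_2\|^p (\|x_1\|/\|x_2\|)^{p-1}$). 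The common factor $\|x_2\|^p$ cancels in the quotient, leaving
\[
\frac{\alpha_p[x_1,x_2]}{\beta_p[x_1,x_2]} = \frac{\big\| \lambda^p u - v \big\|}{\big\| \lambda u - \lambda^{p-1} v \big\|}.
\]

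**Matching the two forms.** At this point the quotient still looks slightly different from the claimed expression $\|\lambda x_1 - \lambda^{p-1} x_2\| / \|\lambda^{2-p} x_1 - x_2\|$ (with $x_1, x_2$ now playing the role of unit vectors). The second step is to reconcile the two: starting from $\|\lambda^p u - v\| / \|\lambda u - \lambda^{p-1} v\|$, multiply numerator and denominator by $\lambda^{1-p}$ to get $\|\lambda u - \lambda^{1-p} v\| / \|\lambda^{2-p} u - v\|$ — but note the numerator here is $\|\lambda u - \lambda^{1-p}v\|$, whereas the target has $\|\lambda u - \lambda^{p-1}v\|$. These agree only when $p = 1$ or when we also swap the roles of $u$ and $v$. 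The clean way around this is to observe that the defining set of the supremum is symmetric under interchanging $x_1 \leftrightarrow x_2$ (which sends $\lambda \mapsto 1/\lambda$), so without loss of generality one may run the substitution with $\lambda = \|x_2\|/\|x_1\|$ instead, or equivalently relabel, to land exactly on $\|\lambda u - \lambda^{p-1} v\| / \|\lambda^{2-p} u - v\|$. I would spell out this symmetry reduction carefully rather than hand-wave it.

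**Equality of the suprema.** Having shown that every quotient in the original supremum equals a quotient of the new form for some $u, v \in S_\mathcal{X}$ and $\lambda > 0$, I get "$\le$". For the reverse inequality, given any $u, v \in S_\mathcal{X}$ and $\lambda > 0$ with $\lambda \neq 1$, I set $x_1 = \lambda^{1/p} u$... — actually it is cleaner to set $x_1$ and $x_2$ to be suitable positive scalar multiples of $u$ and $v$ chosen so that $\|x_1\|/\|x_2\|$ produces the given $\lambda$; since the map $(\|x_1\|,\|x_2\|) \mapsto \|x_1\|/\|x_2\|$ is surjective onto $(0,\infty)$, such a choice always exists, and then the computation above runs in reverse. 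One must also check that $\lambda \neq 1$ corresponds exactly to $\beta_p[x_1,x_2] \neq 0$ being unobstructed: when $u \neq v$, $\beta_p \neq 0$ automatically, and when $u = v$ the quotient in the new form is $|\lambda - \lambda^{p-1}|/|\lambda^{2-p}-1|$, which is finite and $\le$ something harmless, so these degenerate cases do not affect the supremum.

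**Main obstacle.** The routine part is the algebra of pulling out powers of the norms; the only genuinely delicate point is the bookkeeping in the second paragraph — making sure the exponents $p-1$ versus $1-p$ end up on the correct side, which forces the use of the $x_1 \leftrightarrow x_2$ symmetry (equivalently $\lambda \leftrightarrow 1/\lambda$) and a consistent choice of which ratio to call $\lambda$. I would therefore state the symmetry of the defining set explicitly as a preliminary observation, then do the substitution once and invoke symmetry to get the stated normalization, and finally handle the $u=v$ degenerate case in a single sentence.
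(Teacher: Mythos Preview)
Your approach is essentially the paper's: normalize $x_1,x_2$ to unit vectors $u,v$ and encode the remaining freedom in the single parameter $\lambda=\|x_1\|/\|x_2\|$, then factor out $\|x_2\|^p$ from numerator and denominator. Your computation
\[
\frac{\alpha_p[x_1,x_2]}{\beta_p[x_1,x_2]}
=\frac{\|\lambda^p u-v\|}{\|\lambda u-\lambda^{p-1}v\|}
=\frac{\|\lambda u-\lambda^{1-p}v\|}{\|\lambda^{2-p}u-v\|}
\]
is exactly what the paper obtains (multiply top and bottom by $\lambda^{1-p}$). The discrepancy you flag is not yours to repair: the exponent $p-1$ in the stated proposition is a typo for $1-p$, and the paper's own proof writes $\lambda^{1-p}$ throughout. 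Indeed the expression as stated is vacuous, since $\lambda x_1-\lambda^{p-1}x_2=\lambda^{p-1}(\lambda^{2-p}x_1-x_2)$ makes the quotient identically $\lambda^{p-1}$, with supremum $+\infty$ for $p<1$.

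Your proposed symmetry argument ($x_1\leftrightarrow x_2$, $\lambda\leftrightarrow 1/\lambda$) therefore cannot land on the stated form and should simply be dropped; once the exponent is corrected no further manipulation is needed. For the reverse inequality the paper gives the explicit choice $u=\lambda^{1/p}x_1$, $v=\lambda^{(1-p)/p}x_2$, which is a concrete instance of your surjectivity argument (and, like yours, needs a small adjustment at $p=0$, where one may just take $\|u\|=\lambda$, $\|v\|=1$). Otherwise your plan and the paper's proof coincide.
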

\begin{proof}On the one hand, since
	$$\begin{aligned}
		\frac{\left\|\frac{x_1}{\|x_1\| 1-p}-\frac{x_2}{\|x_2\| 1-p}\right\|}{\left\|\frac{x_1}{\|x_2\| 1-p}-\frac{x_2}{\|x_1\|^{1-p}}\right\|}&=\frac{\left\|\frac{x_1}{\|x_1\|} \cdot\| x_1\|^p-\frac{x_2}{\|x_2\|} \cdot\|x_2\|^p\right\|}{\left\|\frac{\|x_1\|}{\|x_2\|^{1-p}} \cdot \frac{x_1}{\|x_1\|}-\frac{\|x_2\|}{\|x_1\|^{-p}} \cdot \frac{x_2}{\|x_2\|}\right\|}\\&=\frac{\|x_1\|^p \bigg\|\frac{x_1}{\| x_1\|}-\frac{x_2}{\| x_2\|}\cdot\left(\frac{\|x_2\|}{\|x_1\|}\right)^p\bigg \|}{\frac{\|x_2\|}{\left\|x_1\right\|^{1-p}}\left\|\frac{x_1}{\|x_1\|} \cdot \frac{\|x_1\|}{\left\|x_2\right\|^{1-p}} \cdot \frac{\|x_1\|^{1-p}}{\left\|x_2\right\|}-\frac{x_2}{\| x_2}\right\|}\\&=\frac{\| x_1\|}{\| x_2\|} \cdot \frac{\left\|\frac{x_1}{\|x_1\|}-\left(\frac{\|x_2\|}{\| x_1\|}\right)^p \cdot \frac{x_2}{\|x_2\|}\right\|}{\left\|\left(\frac{\|x_1\|}{\| x_2\|}\right)^{2-p} \cdot \frac{x_1}{\| x_1\|}-\frac{x_2}{\|x_2\|}\right\|},
	\end{aligned}
	$$
	then let $\lambda=\frac{\|x_1\|}{\|x_2\|}$, we get that
	$$\begin{aligned}
		\frac{\left\|\frac{x_1}{\|x_1\| 1-p}-\frac{x_2}{\|x_2\| 1-p}\right\|}{\left\|\frac{x_1}{\|x_2\| 1-p}-\frac{x_2}{\|x_1\|^{1-p}}\right\|}&=\frac{\lambda\left\|x_1-\lambda^{-p} x_2\right\|}{\left\|\lambda^{2-p} x_1-x_2\right\|}=\frac{\left\|\lambda x_1-\lambda^{1-p} x_2\right\|}{\left\|\lambda^{2-p} x_1-x_2\right\|}\\&\leq\sup \left\{\frac{\| \lambda x_1-\lambda^{1-p} x_2\|}{\left\|\lambda^{2-p} x_1-x_2\right\|}:x_1, x_2 \in S_{\mathcal{X}}, \lambda>0, \lambda \neq 1\right\},
	\end{aligned}$$
	which means that $$\mathcal{MR}_p(\mathcal{X})\leq\sup\left\{\frac{\|\lambda x_1-\lambda^{p-1}x_2\|}{\|\lambda^{2-p}x_1-x_2\|},~x_1,x_2\in S_\mathcal{X},\lambda>0,\lambda\neq 1\right\}.$$
	On the other hand, for any $x_1,x_2\in S_{\mathcal{X}}$ and $\lambda$ with $\lambda>0,\lambda\neq 1$, we put $u=\lambda^{\frac1p}x_1$, $v=\lambda^{\frac{1-p}{p}}x_2$, thus, it's clear that $u,v\in \mathcal{X}\backslash 0$ with $\beta_{p}[u,v]\neq0$ and $$\frac{\left\|\lambda x_1-\lambda^{1-p} x_2\right\|}{\left\|\lambda^{2-p} x_1-x_2\right\|}=\frac{\bigg\| \frac{u}{\|u\|^{1-p}}-\frac{v}{\|v\|^{1-p}}\bigg \|}{\bigg\| \frac{u}{\|v\|^{1-p}}-\frac{v}{\|u\|^{1-p}}\bigg\|} \leq \mathcal{M R}_p(X).$$
	This implies that $$\mathcal{MR}_p(\mathcal{X})\geq\sup\left\{\frac{\|\lambda x_1-\lambda^{p-1}x_2\|}{\|\lambda^{2-p}x_1-x_2\|},~x_1,x_2\in S_\mathcal{X},\lambda>0,\lambda\neq 1\right\}.$$
	This completed the proof.
\end{proof}

Next, we are going to make an estimation of the value of this constant in any Banach space.
\begin{Proposition}\label{p1}
	Let $\mathcal{X}$ be a Banach space. Then, $$1\leq \mathcal{MR}_p(\mathcal{X})\leq 2.$$
\end{Proposition}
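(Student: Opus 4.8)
The plan is to treat the two inequalities separately. For the upper bound $\mathcal{MR}_p(\mathcal{X})\le 2$ I would work with the equivalent description $\mathcal{MR}_p(\mathcal{X})=\sup\{\|\lambda x_1-\lambda^{1-p}x_2\|/\|\lambda^{2-p}x_1-x_2\| : x_1,x_2\in S_{\mathcal{X}},\ \lambda>0,\ \lambda\ne1\}$ established in the previous proposition; for the lower bound $\mathcal{MR}_p(\mathcal{X})\ge1$ I would exhibit a family of admissible pairs along which the defining ratio tends to $1$.

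\emph{Lower bound.} Fix any $u\in S_{\mathcal{X}}$ and, for $t>0$ with $t\ne1$, consider the pair $x_1=u$, $x_2=-tu$. A direct computation gives $\alpha_p[u,-tu]=\|u+t^{p}u\|=1+t^{p}$ and $\beta_p[u,-tu]=\|t^{p-1}u+tu\|=t^{p-1}+t$, both nonzero, hence $\alpha_p[u,-tu]/\beta_p[u,-tu]=(1+t^{p})/(t+t^{p-1})$. Letting $t\to1$ this quotient tends to $1$, and since $\mathcal{MR}_p(\mathcal{X})$ is a supremum taken in particular over all these pairs, we obtain $\mathcal{MR}_p(\mathcal{X})\ge1$.

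\emph{Upper bound.} It suffices to prove $\|\lambda x_1-\lambda^{1-p}x_2\|\le 2\|\lambda^{2-p}x_1-x_2\|$ for all $x_1,x_2\in S_{\mathcal{X}}$ and all $\lambda>0$, $\lambda\ne1$. The quotient is unchanged under $(x_1,x_2,\lambda)\mapsto(x_2,x_1,\lambda^{-1})$, so we may assume $0<\lambda<1$. I would then write $\lambda x_1-\lambda^{1-p}x_2=\lambda^{1-p}(\lambda^{p}x_1-x_2)$ and $\lambda^{p}x_1-x_2=(\lambda^{2-p}x_1-x_2)+(\lambda^{p}-\lambda^{2-p})x_1$; since $0<\lambda<1$ and $p\le 2-p$ we have $\lambda^{p}-\lambda^{2-p}\ge0$, so by the triangle inequality and $\|x_1\|=1$, $\|\lambda^{p}x_1-x_2\|\le\|\lambda^{2-p}x_1-x_2\|+(\lambda^{p}-\lambda^{2-p})$, and multiplying by $\lambda^{1-p}$ gives $\|\lambda x_1-\lambda^{1-p}x_2\|\le\lambda^{1-p}\|\lambda^{2-p}x_1-x_2\|+(\lambda-\lambda^{3-2p})$. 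The key elementary inequality is $\lambda-\lambda^{3-2p}\le 1-\lambda^{2-p}$ for $0<\lambda\le1$, $0\le p\le1$, which follows from $1-\lambda^{2-p}-\lambda+\lambda^{3-2p}=(1-\lambda)-\lambda^{2-p}(1-\lambda^{1-p})\ge(1-\lambda)-(1-\lambda^{1-p})=\lambda^{1-p}-\lambda\ge0$. Combining this with $1-\lambda^{2-p}\le\|x_2\|-\lambda^{2-p}\|x_1\|\le\|\lambda^{2-p}x_1-x_2\|$ and with $\lambda^{1-p}\le1$ yields $\|\lambda x_1-\lambda^{1-p}x_2\|\le(1+\lambda^{1-p})\|\lambda^{2-p}x_1-x_2\|\le 2\|\lambda^{2-p}x_1-x_2\|$, and taking the supremum gives $\mathcal{MR}_p(\mathcal{X})\le2$.

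The bulk of the work is routine; the one step that needs a little care is the elementary inequality $\lambda-\lambda^{3-2p}\le1-\lambda^{2-p}$, where one must find the right grouping, and, relatedly, keeping the monotonicity of $t\mapsto\lambda^{t}$ straight — the exponent comparisons $\lambda^{p}\ge\lambda^{2-p}$, $\lambda^{1-p}\le1$, and $\lambda^{1-p}\ge\lambda$ are valid precisely because we have normalized to $0<\lambda<1$. On the lower-bound side the only subtlety is recognizing that one should not hunt for an extremal pair: a limiting family suffices, which is consistent with the fact — established later in the paper — that $\mathcal{MR}_p(\mathcal{X})=1$ exactly characterizes inner product spaces.
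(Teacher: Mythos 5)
Your proof is correct, but the upper bound is obtained by a genuinely different route from the paper's. The paper derives $\mathcal{MR}_p(\mathcal{X})\le 2$ by importing the inequality $\alpha_p[x_1,x_2]\le\frac{4-3p}{2-p}\,\beta_p[x_1,x_2]$ from Rooin--Habibzadeh--Moslehian (their Lemma, case $0\le\frac{p}{2-p}\le 1$), combined with the observation that the factor $\max(\|x_1\|^{p-1}\|x_2\|^{1-p},\|x_2\|^{p-1}\|x_1\|^{1-p})\ge 1$ and that $\frac{4-3p}{2-p}\le 2$; you instead work entirely inside the equivalent $\lambda$-form, reduce to $0<\lambda<1$ by the symmetry $(x_1,x_2,\lambda)\mapsto(x_2,x_1,\lambda^{-1})$, and run a self-contained triangle-inequality argument whose steps (the decomposition $\lambda^p x_1-x_2=(\lambda^{2-p}x_1-x_2)+(\lambda^p-\lambda^{2-p})x_1$, the elementary inequality $\lambda-\lambda^{3-2p}\le 1-\lambda^{2-p}$, and the reverse triangle inequality) I have checked and which are all valid. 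What you lose is sharpness: the paper's route actually proves the stronger estimate $\mathcal{MR}_p(\mathcal{X})\le\frac{4-3p}{2-p}$, which is strictly below $2$ for $p\in(0,1]$, whereas your factor $1+\lambda^{1-p}$ tends to $2$ as $\lambda\to 1$ and so only yields the constant $2$; what you gain is independence from the external lemma. On the lower bound your limiting family $x_2=-tu$, $t\to 1$, is needlessly indirect --- the pair $x_1=u$, $x_2=-u$ is itself admissible (nothing in the definition excludes $\|x_1\|=\|x_2\|$) and gives the ratio $1$ exactly, which is precisely the paper's one-line argument.
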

Before we can prove this proposition, we must first have recourse to the following crucial lemmas.
\begin{Lemma} \cite{01}
	(i) If $\frac{p}{2-p} \geq 1$, then
	\begin{equation}
		\mathcal{\alpha}_p[x_1, x_2] \\
		\leq \frac{p}{2-p} \max \left(\|x_1\|^{p-1}\|x_2\|^{1-p},\|x_2\|^{p-1}\|x_1\|^{1-p}\right) \beta_p[x_1, x_2].
	\end{equation}
	
	(ii) If $0\leq{\frac{p}{2-p}}\leq1,$ then \begin{equation}\label{e1}
		\mathcal{\alpha}_{p}[x_1,x_2]\leq\frac{4-3p}{2-p}\cdot\frac{\mathcal{\beta}_{p}[x_1,x_2]}{\max(\|x_1\|^{p-1}\|x_2\|^{1-p},\|x_2\|^{p-1}\|x_1\|^{1-p})}.
	\end{equation}
	
	(iii) If $\frac{p}{2-p}\leq0$, then\begin{equation}\label{e2}
		\mathcal{\alpha}_{p}[x_1,x_2]\leq\frac{4-3p}{2-p}\cdot\frac{\max(\|x_1\|^{p},\|x_2\|^{p})}{\max(\|x_1\|\|x_2\|^{p-1},\|x_2\|\|x_1\|^{p-1})}\mathcal{\beta}_{p}[x,y].
	\end{equation}
\end{Lemma}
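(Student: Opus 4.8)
The plan is to reduce all three inequalities to elementary one-variable estimates by normalizing the two vectors. Write $a=\|x_1\|$, $b=\|x_2\|$ and $u=x_1/a$, $v=x_2/b\in S_\mathcal{X}$; since both $\alpha_p$ and $\beta_p$ are symmetric in $x_1,x_2$, I may assume $a\ge b$ and set $t=a/b\ge 1$. The first step is the algebraic identity that puts both quantities on a common footing: $\alpha_p[x_1,x_2]=\|a^p u-b^p v\|$ and $\beta_p[x_1,x_2]=a^{p-1}b^{p-1}\,\|a^{2-p}u-b^{2-p}v\|$, the latter obtained by pulling the common factor $a^{p-1}b^{p-1}$ out of $ab^{p-1}u-a^{p-1}bv$. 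Thus everything is governed by the two interpolated vectors $a^p u-b^p v$ and $a^{2-p}u-b^{2-p}v$.

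Next I would relate these two vectors by a single triangle-inequality decomposition. Eliminating the $u$-component gives $a^p u-b^p v=a^{2p-2}\bigl(a^{2-p}u-b^{2-p}v\bigr)+\bigl(a^{2p-2}b^{2-p}-b^p\bigr)v$, so that taking norms, using $\|v\|=1$ and $\|a^{2-p}u-b^{2-p}v\|=\beta_p/(a^{p-1}b^{p-1})$, yields $$\alpha_p[x_1,x_2]\le t^{p-1}\beta_p[x_1,x_2]+b^p\bigl|1-t^{2p-2}\bigr|.$$ The main term $t^{p-1}\beta_p$ already matches the target up to the stated constant, so the whole problem is to absorb the correction term $b^p\bigl|1-t^{2p-2}\bigr|$ into a multiple of $\beta_p$. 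For this I would use the reverse triangle inequality $\|a^{2-p}u-b^{2-p}v\|\ge|a^{2-p}-b^{2-p}|$, which gives the lower bound $\beta_p[x_1,x_2]\ge b^p\,|t-t^{p-1}|$.

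Combining the upper and lower bounds, each case becomes a purely scalar inequality in $t\ge 1$. Since for $p\le 1$ one has $\max(\|x_1\|^{p-1}\|x_2\|^{1-p},\|x_2\|^{p-1}\|x_1\|^{1-p})=t^{1-p}$, case (ii) reduces to showing $1-t^{2p-2}\le \frac{2(1-p)}{2-p}\,t^{2p-2}(t^{2-p}-1)$ for $0\le p\le 1$; this is an equality at $p=0$ and trivial at $t=1$, and the constant $\tfrac{4-3p}{2-p}=1+\tfrac{2(1-p)}{2-p}$ emerges exactly from adding the correction to the leading coefficient $1$. Case (i), where $p\ge 1$ and the relevant maximum is $t^{p-1}$, reduces analogously to $t^{2p-2}-1\le\frac{2(p-1)}{2-p}\,t^{2p-2}(t^{2-p}-1)$, producing the constant $\tfrac{p}{2-p}$; the two constants agree at $p=1$. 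Case (iii), $p\le 0$, is handled by the same machinery after recording that there the governing ratio $\frac{\max(\|x_1\|^p,\|x_2\|^p)}{\max(\|x_1\|\|x_2\|^{p-1},\|x_2\|\|x_1\|^{p-1})}$ equals $t^{-1}$, against which both the main term $t^{p-1}\beta_p$ and the correction must be measured.

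The main obstacle is the final one-variable verification: proving the scalar inequalities above for all $t\ge 1$ across the full range of $p$ in each regime, and confirming that $\tfrac{p}{2-p}$ and $\tfrac{4-3p}{2-p}$ are exactly what the method produces rather than something larger. I would dispatch these by letting $f(t)$ be the difference of the two sides, checking $f(1)=0$, and showing $f'(t)\ge 0$ on $[1,\infty)$ — a differentiation that is routine but must be done with care, because near $t=1$ both sides vanish to the same order, so the inequality is tight there and leaves essentially no slack.
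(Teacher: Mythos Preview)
The paper does not prove this lemma at all: it is quoted verbatim from Rooin, Habibzadeh, and Moslehian \cite{01} and used as a black box to obtain Lemma~\ref{l1} and Proposition~\ref{p1}. So there is no in-paper proof to compare your proposal against.

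That said, your approach is sound and essentially complete. Your normalization and the decomposition
\[
a^{p}u-b^{p}v=a^{2p-2}\bigl(a^{2-p}u-b^{2-p}v\bigr)+\bigl(a^{2p-2}b^{2-p}-b^{p}\bigr)v
\]
are correct, and the resulting scalar inequalities are exactly the right reductions. In fact the derivative check you outline goes through cleanly in cases (i) and (ii): for
\[
g(t)=\frac{2|1-p|}{2-p}\bigl(t^{p}-t^{2p-2}\bigr)-\bigl|1-t^{2p-2}\bigr|
\]
one has $g(1)=0$ and, after simplification,
\[
g'(t)=\frac{2p|1-p|}{2-p}\,t^{2p-3}\bigl(t^{2-p}-1\bigr)\ge 0\quad(t\ge 1,\ 0\le p<2),
\]
so the constants $\tfrac{p}{2-p}$ and $\tfrac{4-3p}{2-p}$ really are exactly what the method yields, with equality as $t\to 1$. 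Case (iii) follows by the same mechanism once you record that the target coefficient is $\tfrac{4-3p}{2-p}\,t^{-1}$ and that $t^{p-1}\le t^{-1}$ for $p\le 0$; the remaining one-variable inequality again has equality at $t=1$ and the right derivative sign. The only part you have left genuinely implicit is this last verification for $p\le 0$, but it is of the same flavor and presents no new obstacle.
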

\begin{Lemma}\label{l1}
	Let $0\leq p\leq1$ is a real number, and $\mathcal{X}$ be a Banach space. Then for any $x_1,x_2\in \mathcal{X}\backslash\{0\}$,  $$\frac{1}{\max(\|x_1\|^{p-1}\|x_2\|^{1-p},\|x_2\|^{p-1}\|x_1\|^{1-p})}\leq1.$$Additionally, $\mathcal{\alpha}_{p}[x_1,x_2]\leq\frac{4-3p}{2-p}\mathcal{\beta}_{p}[x_1,x_2]$ is always valid for any $0\leq p\leq1$.
\end{Lemma}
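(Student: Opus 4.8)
The plan is to dispatch the two assertions separately: the first is an elementary comparison of powers of the two norms, and the second then follows immediately by feeding it into part (ii) of the preceding lemma.

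First I would prove the bound on the reciprocal. Set $a=\|x_1\|>0$ and $b=\|x_2\|>0$ and note that
$$\|x_1\|^{p-1}\|x_2\|^{1-p}=\Bigl(\tfrac{b}{a}\Bigr)^{1-p},\qquad \|x_2\|^{p-1}\|x_1\|^{1-p}=\Bigl(\tfrac{a}{b}\Bigr)^{1-p}.$$
Since $0\le p\le 1$ the exponent $1-p$ is nonnegative, so whichever of $a,b$ is the larger yields a ratio $\ge 1$ raised to a nonnegative power, i.e.\ a quantity $\ge 1$; hence the maximum of the two displayed quantities is $\ge 1$ and its reciprocal is $\le 1$. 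One may simply assume $a\ge b$ without loss of generality, the maximum being symmetric under interchange of $x_1$ and $x_2$.

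For the second assertion I would first check that the range $0\le p\le 1$ places us exactly in case (ii) of the preceding lemma: $2-p\in[1,2]$ is positive, so $\frac{p}{2-p}\ge 0$, and $\frac{p}{2-p}\le 1$ is equivalent to $p\le 2-p$, i.e.\ to $p\le 1$. Thus (\ref{e1}) is available, and combining it with the first part — noting that $\beta_p[x_1,x_2]\ge 0$ and that the factor $\frac{4-3p}{2-p}$ is strictly positive on $[0,1]$ (numerator at least $1$, denominator at least $1$) — lets me replace the denominator $\max\bigl(\|x_1\|^{p-1}\|x_2\|^{1-p},\|x_2\|^{p-1}\|x_1\|^{1-p}\bigr)\ge 1$ by $1$ without reversing the inequality, which gives $\alpha_p[x_1,x_2]\le\frac{4-3p}{2-p}\beta_p[x_1,x_2]$.

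There is no genuine obstacle here; this lemma is essentially a bookkeeping step that normalizes the cited estimate into a clean $p$-uniform form. The only points to tread carefully over are confirming that $0\le p\le 1$ falls into case (ii) rather than (i) or (iii) of the preceding lemma, and tracking signs so that dropping the (at least $1$) denominator preserves the direction of the inequality.
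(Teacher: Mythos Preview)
Your proof is correct and follows the same approach as the paper: the paper also shows the maximum is at least $1$ by comparing $(\|x_1\|/\|x_2\|)^{1-p}$ with $(\|x_2\|/\|x_1\|)^{1-p}$ (splitting into three cases rather than invoking symmetry), and then concludes the second assertion directly from inequality~\eqref{e1}. Your explicit verification that $0\le p\le 1$ lands in case~(ii) of the preceding lemma is a useful detail the paper leaves implicit.
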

\begin{proof}
	In the following proof, we always let $0\leq p\leq1$,	to prove $$\frac{1}{\max(\|x_1\|^{p-1}\|x_2\|^{1-p},\|x_2\|^{p-1}\|x_1\|^{1-p})}\leq1,$$ we denote $\mathcal{A}=\max(\|x_1\|^{p-1}\|x_2\|^{1-p},\|x_2\|^{p-1}\|x_1\|^{1-p})$, then we consider the following three cases:
	
	\textbf{Case i:} Assume that $\|x_1\|=\|x_2\|$, then it's clear that $\mathcal{A}=1$, thus, $\frac{1}{\mathcal{A}}=1$.
	
	\textbf{Case ii:} Assume that $\|x_1\|>\|x_2\|$, then we have $\bigg(\frac{\|x_2\|}{\|x_1\|}\bigg)^{1-p}<\bigg(\frac{\|x_1\|}{\|x_2\|}\bigg)^{1-p}$, thus, $\mathcal{A}=\bigg(\frac{\|x_1\|}{\|x_2\|}\bigg)^{1-p}>1$, it follows that $\frac{1}{\mathcal{A}}< 1$.
	
	\textbf{Case iii:} Assume that$\|x_1\|<\|x_2\|$, then we have $\bigg(\frac{\|x_2\|}{\|x_1\|}\bigg)^{1-p}>\bigg(\frac{\|x_1\|}{\|x_2\|}\bigg)^{1-p}$, thus, $\mathcal{A}=\bigg(\frac{\|x_2\|}{\|x_1\|}\bigg)^{1-p}>1$, it follows that $\frac{1}{\mathcal{A}}< 1$.
	
	From the above, we obtain that $$\frac{1}{\max(\|x_1\|^{p-1}\|x_2\|^{1-p},\|x_2\|^{p-1}\|x_1\|^{1-p})}\leq1.$$
	Additionally, according to the inequality \eqref{e1}, it follows that $$\mathcal{\alpha}_{p}[x_1,x_2]\leq\frac{4-3p}{2-p}\mathcal{\beta}_{p}[x_1,x_2].$$
\end{proof}
\noindent	\textbf{Proof of Proposition \ref{p1}}
First, let $x_1=-x_2$, then we have $$\frac{\mathcal{\alpha}_p \left[x_1,x_2\right]}{\mathcal{\beta}_p \left[x_1,x_2\right]}=1,$$which implies that $\mathcal{MR}_p(\mathcal{X})\geq 1$.

Conversely, by the above Lemma \ref{l1}, we know that $\mathcal{\alpha}_{p}[x_1,x_2]\leq\frac{4-3p}{2-p}\mathcal{\beta}_{p}[x_1,x_2]$ is always valid for any $0\leq p\leq1$, and notice that, $\frac{4-3p}{2-p}$ is a decreasing function with respect to $p$, which means that $\frac{4-3p}{2-p}\leq 2$ holds for any  $0\leq p\leq1$. Thus, we get that $$\mathcal{MR}_p(\mathcal{X})\leq \frac{4-3p}{2-p}\leq2.$$

In the next place, we will give an example to show that the upper bound of the $\mathcal{MR}_p(\mathcal{X})$ constant is sharp.
\begin{Example}
	Let $\mathcal{X}_1=(\mathcal{R}^{n},\|\cdot\|_1)$, where the norm $\|\cdot\|_1$ is defined by $$\|x_i\|_1=\|(x_{i1},x_{i2})\|_1=|x_{i1}|+|x_{i2}|,$$ then $\mathcal{MR}_p(\mathcal{X})=2$.
\end{Example}
\begin{proof}
	Consider  $x_1=(0,1)$ and $x_2=(r,1)$, where $r$ is a sufficiently small positive real number. Then we have $$\begin{aligned}
		\bigg\|\frac{x_1}{\|x_1\|^{1-p}}-\frac{x_2}{\|x_2\|^{1-p}}\bigg\|&=\frac{r}{(1+r)^{1-p}}+1-\frac{1}{(1+r)^{1-p}}\\&=(1+r)^p+1,
	\end{aligned}$$
	and$$\begin{aligned}
		\bigg\|\frac{x_1}{\|x_2\|^{1-p}}-\frac{x_2}{\|x_1\|^{1-p}}\bigg\|&=\frac{1}{(1+r)^{1-p}}-r+1-\frac{1}{(1+r)^{1-p}}\\&=1-r.
	\end{aligned}$$
	Then, let $r\to 0$, we get that $$\frac{\bigg\|\frac{x_1}{\|x_1\|^{1-p}}-\frac{x_2}{\|x_2\|^{1-p}}\bigg\|}{	\bigg\|\frac{x_1}{\|x_2\|^{1-p}}-\frac{x_2}{\|x_1\|^{1-p}}\bigg\|}=2,$$which means that $\mathcal{MR}_p(\mathcal{X})\geq2$. Therefore, combined with the upper bound of the $\mathcal{MR}_p(\mathcal{X})$ constant in Proposition \ref{p1}. 
	
	We completed the proof.
\end{proof}

Next, based on the above proposition, we will show that	when the $\mathcal{MR}_p(\mathcal{X})$ constant satisfies certain conditions, it can ensure that a normed linear space must be an inner product space, as shown in the following proposition.
\begin{Theorem}\label{p2}
	Let $\mathcal{X}$ be a normed linear space. Then,	$\mathcal{X}$ is an inner product space if and only if  $\mathcal{MR}_p(\mathcal{X})=1$.
\end{Theorem}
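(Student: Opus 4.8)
The plan is to read both implications off the pointwise comparison between the $p$-angular and the skew $p$-angular distances, using the characterization of inner product spaces of Jamal et al.\ \cite{01} together with the lower bound $\mathcal{MR}_p(\mathcal{X})\ge 1$ from Proposition \ref{p1}. Throughout, $p$ is the fixed exponent and we take $0\le p<1$; the value $p=1$ has to be excluded, since $\alpha_1[x_1,x_2]=\|x_1-x_2\|=\beta_1[x_1,x_2]$ forces $\mathcal{MR}_1(\mathcal{X})=1$ for every Banach space. The implication ``inner product space $\Rightarrow\mathcal{MR}_p(\mathcal{X})=1$'' is then immediate: in an inner product space $\alpha_p[x_1,x_2]\le\beta_p[x_1,x_2]$ for all $x_1,x_2\in\mathcal{X}\setminus\{0\}$ by \cite{01}, so every quotient in the definition of $\mathcal{MR}_p(\mathcal{X})$ is at most $1$ and hence $\mathcal{MR}_p(\mathcal{X})\le 1$; combined with Proposition \ref{p1} this gives $\mathcal{MR}_p(\mathcal{X})=1$.

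For the converse, suppose $\mathcal{MR}_p(\mathcal{X})=1$. Since the supremum of the nonnegative quotients equals $1$, we obtain $\alpha_p[x_1,x_2]\le\beta_p[x_1,x_2]$ for every pair with $\beta_p[x_1,x_2]\ne 0$. The remaining pairs are trivial: if $\beta_p[x_1,x_2]=0$ then $x_1\|x_1\|^{1-p}=x_2\|x_2\|^{1-p}$, so taking norms $\|x_1\|^{2-p}=\|x_2\|^{2-p}$, whence $\|x_1\|=\|x_2\|$ (as $2-p>0$) and therefore $x_1=x_2$ with $\alpha_p[x_1,x_2]=0=\beta_p[x_1,x_2]$. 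Thus $\alpha_p[x_1,x_2]\le\beta_p[x_1,x_2]$ holds on all of $\mathcal{X}\setminus\{0\}$, and the converse implication of \cite{01}, used in the form ``if $\alpha_p\le\beta_p$ identically for one fixed $p\le 1$ then $\mathcal{X}$ is an inner product space'', finishes the proof. If one wants an argument that does not lean on \cite{01} for this step, the equivalent form of $\mathcal{MR}_p(\mathcal{X})$ proved above rewrites the hypothesis as $\|\lambda x_1-\lambda^{1-p}x_2\|\le\|\lambda^{2-p}x_1-x_2\|$ for all $x_1,x_2\in S_{\mathcal{X}}$ and $\lambda>0$; both sides coincide at $\lambda=1$, so that point is a global minimum of their difference, and equating the derivatives there forces, since $p\ne 1$, the relation $\varphi_{x_1-x_2}(x_1+x_2)=0$ for a supporting functional $\varphi_{x_1-x_2}$ at $x_1-x_2$ and all unit $x_1,x_2$, i.e.\ $(x_1-x_2)\perp_B(x_1+x_2)$. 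As $x_1+x_2$ and $x_1-x_2$ are always isosceles orthogonal when $\|x_1\|=\|x_2\|$, and every isosceles-orthogonal pair has this form up to scaling, this says exactly that $\perp_I\subseteq\perp_B$, which is a classical characterization of inner product spaces.

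The converse is the only part carrying content, and even there it is modest. If \cite{01} is quoted, nothing beyond the degenerate case $\beta_p=0$ and the exclusion of $p=1$ is needed. In the from-scratch version the one place requiring care is the differentiation at $\lambda=1$: the norm need not be differentiable, so the identity $\varphi_{x_1-x_2}(x_1+x_2)=0$ is obtained directly only for pairs at which $x_1-x_2$ is a smooth point of the norm, and one must propagate it to all pairs using that such pairs are dense and that Birkhoff--James orthogonality is preserved under limits; after that one invokes (or, if preferred, reproves) the classical implication $\perp_I\subseteq\perp_B\Rightarrow$ inner product space. Everything else is routine.
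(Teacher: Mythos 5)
Your proof is correct and follows essentially the same route as the paper's: quote the characterization of inner product spaces via $\alpha_p[x_1,x_2]\le\beta_p[x_1,x_2]$ from \cite{01} and combine it with the lower bound $\mathcal{MR}_p(\mathcal{X})\ge 1$ of Proposition \ref{p1}. You are in fact more careful than the paper, which neither treats the degenerate pairs with $\beta_p[x_1,x_2]=0$ nor excludes $p=1$ --- an exclusion that is genuinely needed, since $\mathcal{MR}_1(\mathcal{X})=1$ for every Banach space.
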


\begin{proof}
	Since it has been proven that $\mathcal{X}$ is
	an inner product space if and only if   $\mathcal{\alpha}_{p}[x_1,x_2]\leq\mathcal{\beta}_{p}[x_1,x_2]$ for any $p\leq1$, it follows  that $\mathcal{MR}_p(\mathcal{X})\leq1$. Then combined with the lower bound of the $\mathcal{MR}_p(\mathcal{X})$ constant in Proposition \ref{p1}. 
	
	We completed the proof.
\end{proof}

\section{The relations with the Maligranda-Rooin constant and other geometric properties of Banach spaces} 	
In this section, we will show  some relations with the Maligranda-Rooin constant and other geometric properties, including  uniform convexity, uniform smoothness and normal structure. Before starting, we need the following crucial lemma:
\begin{Lemma}\label{l2}
	Let $\mathcal{X}$ be a Banach space, $\mathcal{X}^*$ is denoted as the dual space of $\mathcal{X}$.   If $\left\{f_{n}\right\}_{n=1}^{\infty}\subseteq \mathcal{B}_{\mathcal{X}^*}$ and $\left\{x_n\right\}_{n=1}^{\infty}\subseteq \mathcal{B}_{\mathcal{X}}$ such that $\lim _{n \rightarrow \infty} g_n\left(x_n\right)=1$, then, for any sequence $\left\{g_n\right\}_{n=1}^{\infty}\subseteq \mathcal{B}_{\mathcal{X}^*}$ with $\liminf _{n \rightarrow \infty} g_n\left(x_n\right)>0$, we have$$
	\mathcal{MR}_p(X) \geq \max \left\{\liminf _{n \rightarrow \infty}\left\|g_n\left(x_n\right) f_n-g_n\right\|, 1\right\} \geq\max \left\{\liminf _{n \rightarrow \infty} g_n\left(x_n\right)\left\|f_n-g_n\right\|, 1\right\} .
	$$
\end{Lemma}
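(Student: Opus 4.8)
Throughout write $a_n=g_n(x_n)$; since $g_n(x_n)\le\|g_n\|\,\|x_n\|\le 1$ and $\liminf_n a_n>0$ we have $0<a_n\le 1$ for all large $n$, while the hypothesis $\lim_n f_n(x_n)=1$ together with $\|f_n\|,\|x_n\|\le 1$ forces $\|f_n\|\to 1$ and $\|x_n\|\to 1$. The bound $\mathcal{MR}_p(\mathcal X)\ge 1$ is exactly Proposition \ref{p1}, so the two substantive points are the inequality $\mathcal{MR}_p(\mathcal X)\ge\liminf_n\|a_nf_n-g_n\|$ and the second inequality of the displayed chain.

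\textbf{The first inequality.} Put $h_n=a_nf_n-g_n\in\mathcal X^*$; the only feature we use is that $h_n(x_n)=a_n\bigl(f_n(x_n)-1\bigr)\to 0$, so $h_n$ is asymptotically tangential at $x_n$. For each $n$ pick $y_n\in S_{\mathcal X}$ with $h_n(y_n)>\|h_n\|-1/n$. The plan is to test the equivalent description of $\mathcal{MR}_p(\mathcal X)$ proved above (its $\lambda$-form on $S_{\mathcal X}\times S_{\mathcal X}$) at the pair consisting of $x_1=x_n/\|x_n\|$ and a point $x_2$ of $S_{\mathcal X}$ obtained by moving a small distance $\varepsilon$ from $x_1$ towards $y_n$ and renormalising, with the scalar $\lambda=\lambda(\varepsilon)\to 1$ taken at the matching first-order rate. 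A first-order expansion of $\alpha_p$ and $\beta_p$ in $\varepsilon$ shows that the corresponding quotient converges, as $\varepsilon\to 0^+$, to a ratio of the form $\|c_1x_1-w_n\|/\|c_2x_1-w_n\|$ whose coefficients $c_1,c_2$ and whose vector $w_n$ (a positive multiple of the perturbation direction) can be steered by the free parameters; choosing these so that the denominator is essentially $\operatorname{dist}(y_n,\mathcal R x_1)$ while the numerator is estimated from below by applying $h_n$ (which annihilates $x_1$ up to $o(1)$ and nearly norms $y_n$), one produces for each $n$ a value $\ge\|h_n\|-o(1)$. Letting $\varepsilon\to 0^+$ for fixed $n$ and then taking $\liminf_n$ yields $\mathcal{MR}_p(\mathcal X)\ge\liminf_n\|a_nf_n-g_n\|$.

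\textbf{The second inequality.} Fix $n$ and pick $\psi_n\in S_{\mathcal X}$ with $(f_n-g_n)(\psi_n)>\|f_n-g_n\|-1/n$. Since $(a_nf_n-g_n)(\psi_n)=-(1-a_n)f_n(\psi_n)+(f_n-g_n)(\psi_n)$ and $0\le 1-a_n$, $f_n(\psi_n)\le\|f_n\|\le 1$, we get $\|a_nf_n-g_n\|\ge(a_nf_n-g_n)(\psi_n)>\|f_n-g_n\|-(1-a_n)-1/n$. Hence, if $\|f_n-g_n\|\ge 1$ then $\|f_n-g_n\|-(1-a_n)\ge a_n\|f_n-g_n\|$ and so $\|a_nf_n-g_n\|>a_n\|f_n-g_n\|-1/n$, whereas if $\|f_n-g_n\|<1$ then $a_n\|f_n-g_n\|<1$. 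Consequently, either $\liminf_n a_n\|f_n-g_n\|\le 1$, or $\|f_n-g_n\|\ge 1$ for all large $n$ and then $\liminf_n\|a_nf_n-g_n\|\ge\liminf_n a_n\|f_n-g_n\|$; in both cases $\max\{\liminf_n\|a_nf_n-g_n\|,1\}\ge\max\{\liminf_n a_n\|f_n-g_n\|,1\}$, which completes the chain.

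\textbf{Main obstacle.} The delicate step is the first inequality: carrying out the first-order expansion when $\mathcal X$ is not assumed smooth (so the norm has only one-sided directional derivatives at $x_n$) and, above all, tuning the free parameters so that the limiting quotient is bounded below by the \emph{full} quantity $\|a_nf_n-g_n\|-o(1)$ rather than by a fixed fraction of it — equivalently, preventing the denominator from collapsing faster than the numerator in that estimate.
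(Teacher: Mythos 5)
Your handling of the second inequality in the chain is fine --- indeed it is more self-contained than the paper, which simply cites \cite{03} for that step --- and your reduction of the lower bound $\mathcal{MR}_p(\mathcal{X})\geq 1$ to Proposition \ref{p1} is what the paper does. The problem is the first inequality, $\mathcal{MR}_p(\mathcal{X})\geq\liminf_n\|g_n(x_n)f_n-g_n\|$, which is the entire content of the lemma: what you offer there is a plan, not a proof. You never specify the perturbed point, the renormalisation, or the rate $\lambda(\varepsilon)$; you assert that the free parameters ``can be steered'' so that the limiting quotient dominates $\|h_n\|-o(1)$ without exhibiting any such choice; and you yourself flag exactly this tuning (together with the failure of the norm to be differentiable at $x_n$) as an unresolved obstacle. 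An argument whose decisive step is acknowledged to be open is a gap, and it is a genuine one: a first-order expansion of $\alpha_p/\beta_p$ near a non-smooth point only gives one-sided directional derivatives, and nothing in your sketch rules out the limiting ratio being a proper fraction of $\|h_n\|$.

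The missing idea is an explicit construction that makes the $\beta_p$ denominator collapse \emph{exactly}, with no expansion of the norm at all. Fix $\eta\in\bigl(1,\liminf_n\|g_n(x_n)f_n-g_n\|\bigr)$, choose $y_n\in S_{\mathcal{X}}$ with $\bigl(g_n(x_n)f_n-g_n\bigr)(y_n)>\eta$, and for small $t>0$ set $w_n=(x_n+ty_n)\,\|x_n+ty_n\|^{\frac{p-1}{2-p}}$. The exponent is rigged so that $w_n\|w_n\|^{1-p}=x_n+ty_n$, hence (using $\|x_n\|\to1$) the denominator $\bigl\|\tfrac{w_n}{\|x_n\|^{1-p}}-\tfrac{x_n}{\|w_n\|^{1-p}}\bigr\|$ is asymptotically $\|w_n\|^{p-1}\,\|x_n+ty_n-x_n\|=\|w_n\|^{p-1}\,t$ --- the quantity you were trying to produce by a limiting argument appears here identically in $t$. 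The numerator, divided by $t$, equals $\bigl\|(\|x_n+ty_n\|-1)t^{-1}x_n-y_n\bigr\|$, which one bounds below by applying the functional $g_n$ and using $\|x_n+ty_n\|\geq f_n(x_n)+tf_n(y_n)$ together with $g_n(x_n)>0$; this yields $\tfrac1t\bigl(f_n(x_n)-1\bigr)g_n(x_n)+\bigl(g_n(x_n)f_n-g_n\bigr)(y_n)\geq o(1)+\eta$ as $n\to\infty$ for fixed $t$. Altogether $\mathcal{MR}_p(\mathcal{X})\geq(1+t)^{\frac{p-1}{2-p}}\eta$, and letting $t\to0^+$ and $\eta\uparrow\liminf_n\|g_n(x_n)f_n-g_n\|$ finishes. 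Until you either supply a construction of this kind or carry out your expansion with all parameters pinned down and the non-smoothness addressed, the first inequality remains unproved.
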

\begin{proof}
	First, let us prove that $
	\mathcal{MR}_p(X) \geq \max \left\{\liminf _{n \rightarrow \infty}\left\|g_n\left(x_n\right) f_n-g_n\right\|, 1\right\}.$ We consider the following two cases:
	
	\textbf{Case i:} If $\liminf _{n \rightarrow \infty}\left\|g_n\left(x_n\right) f_n-g_n\right\|\leq 1$, then $$\max \left\{\liminf _{n \rightarrow \infty}\left\|g_n\left(x_n\right) f_n-g_n\right\|, 1\right\}=1,$$ and
	from Proposition \ref{p1}, we know that $	\mathcal{MR}_p(X)\geq1 $ is always valid. Thus, it follows that $
	\mathcal{MR}_p(X) \geq \max \left\{\liminf _{n \rightarrow \infty}\left\|g_n\left(x_n\right) f_n-g_n\right\|, 1\right\}.$
	
	\textbf{Case ii:} If $\liminf _{n \rightarrow \infty}\left\|g_n\left(x_n\right) f_n-g_n\right\|> 1$, then $$\max \left\{\liminf _{n \rightarrow \infty}\left\|g_n\left(x_n\right) f_n-g_n\right\|, 1\right\}=\liminf _{n \rightarrow \infty}\left\|g_n\left(x_n\right) f_n-g_n\right\|.$$
	
	Given $\eta \in\left(1, \liminf _{n \rightarrow \infty}\left\|g_n\left(x_n\right) f_n-g_n\right\|\right)$, then there exists positive integer $\mathcal{N^*}\geq 1$ such that, for all $n \geq \mathcal{N^*}$, the inequality $\left\|g_n\left(x_n\right) f_n-g_n\right\|>\eta$ holds and we can then find $y_n \in \mathcal{S}_\mathcal{X}$ such that $\left(g_n\left(x_n\right) f_n-g_n\right)\left(y_n\right)>\eta$. Moreover, since $\lim _{n \rightarrow \infty} f_n\left(x_n\right)=1$ and $\left\{x_n\right\}_{n=1}^{\infty} \subseteq \mathcal{B}_{\mathcal{X}}$, we obtain that $\lim _{n \rightarrow \infty}\left\|x_n\right\|=1$. Then, for $t \in(0,1)$, we have
	$$
	\lim _{n \rightarrow \infty}\left\|x_n+t y_n\right\| \geq \lim _{n \rightarrow \infty}\left|\left\|x_n\right\|-t\right|=1-t>0
	$$
	Thus, there exists $\mathcal{N}_1>\mathcal{N^*}$ such that for any $n \geq \mathcal{N}_1$, we define $$w_n=(x_n+ty_n)\|x_n+ty_n\|^{\frac{p-1}{2-p}}\neq 0.$$ In addition, since
	$$
	\begin{aligned}
		\lim _{n \rightarrow \infty}\left\|\frac{w_n}{\left\|x_n\right\|^{1-p}}-\frac{x_n}{\left\|w_n\right\|^{1-p}}\right\| & =	\lim _{n \rightarrow \infty}\frac{1}{\|w_n\|^{1-p}}\bigg\|\frac{w_n\|w_n\|^{1-p}}{\|x_n\|^{1-p}}-x_n\bigg\|
		\\&=\lim _{n \rightarrow \infty}\left\|x_n+t y_n\right\|^{\frac{p-1}{2-p}}\left\|\frac{x_n+t y_n}{\left\|x_n\right\|^{1-p}}-x_n\right\| \\
		& \geq (1+t)^{\frac{p-1}{2-p}} t>0.
	\end{aligned}
	$$
	Thus, there exists $\mathcal{N}_2>\mathcal{N}_1$ such that for any $n \geq \mathcal{N}_2$, $\left\|\frac{w_n}{\left\|x_n\right\|^{1-p}}-\frac{x_n}{\left\|w_n\right\|^{1-p}}\right\|>0$ is always holds, which means that
	$$
	\left\|\frac{w_n}{\left\|w_n\right\|^{1-p}}-\frac{x_n}{\left\|x_n\right\|^{1-p}}\right\|\left\|\frac{w_n}{\left\|x_n\right\|^{1-p}}-\frac{x_n}{\left\|w_n\right\|^{1-p}}\right\|^{-1}
	$$
	is well defined for $n$ large enough according to the above discussion.
	\\Then by the definition of $\mathcal{MR}_p(\mathcal{X})$, for each  $n \geq \mathcal{N}_2$, we obtain that $$
	\mathcal{M R}_p(\mathcal{X}) \geq \frac{\bigg\| \frac{w_n}{\left\|w_n\right\|^{1-p}}-\frac{x_n}{\left\|x_n\right\|^{1- p} }\bigg\|}{\left\|\frac{w_n}{\left\|x_n\right\|^{1-p}}-\frac{x_n}{\left\|w_n\right\|^{1- p}}\right\|}=\frac{\left\|w_n-\frac{\left\|w_n\right\|^{1-p} x_n}{\left\|x_n\right\|^{1-p}}\right\|}{\left\|\frac{w_n\left\|w_n\right\|^{1-p}}{\left\|x_n\right\|}-x_n\right\|}.
	$$
	Since $\left\{x_n\right\}_{n=1}^{\infty} \subseteq \mathcal{B}_{\mathcal{X}}$ and $\lim _{n \rightarrow \infty} f_n\left(x_n\right)=1$, it must be $\lim _{n \rightarrow \infty}\left\|x_n\right\|=1$ and therefore$$
	\begin{aligned}
		\mathcal{M R}_p(\mathcal{X}) & \geq \lim _{n \rightarrow \infty} \inf \frac{\left\|w_n-\frac{\left\|w_n\right\|^{1-p} x_n}{\left\|x_n\right\|^{1-p}}\right\|}{\left\|\frac{w_n\left\|w_n\right\|^{1-p}}{\left\|x_n\right\|}-x_n\right\|} \\
		& =\lim _{n \rightarrow \infty} \inf \frac{\left\|w_n-\right\| w_n\left\|^{1-p} x_n\right\|}{\left\|w_n\right\| w_n\left\|^{1-p}-x_n\right\|}\\&=\frac{\lim _{n \rightarrow \infty} \inf \bigg\|\left(x_n+t y_n\right)\| x_n+t y_n\|^{\frac{p-1}{2-p}}-\| x_n+t y_n\|^{\frac{1-p}{2-p}} x_n\bigg\|}{t}\\&=\frac{\lim _{n \rightarrow \infty} \inf\|x_n+t y_n\|^{\frac{p-1}{2-p}}\bigg\|x_n+t y_n-\| x_n+t y_n\|^{\frac{2(1-p)}{2 -p}} x_n\bigg\|}{t},
	\end{aligned}
	$$
	then since $0<p<1$, we know that $\|x_n+ty_n\|^{\frac{2(1-p)}{2-p}}\leq\|x_n+ty_n\|$ is always holds. Hence, we obtain that $$\begin{aligned}
		&\frac{\lim _{n \rightarrow \infty} \inf\|x_n+t y_n\|^{\frac{p-1}{2-p}}\bigg\|x_n+t y_n-\| x_n+t y_n\|^{\frac{2(1-p)}{2 -p}} x_n\bigg\|}{t}\\\geq&\frac{\lim _{n \rightarrow \infty} \inf\|x_n+t y_n\|^{\frac{p-1}{2-p}}\bigg\|x_n+t y_n-\| x_n+t y_n\| x_n\bigg\|}{t}\\\geq&\frac{\lim _{n \rightarrow \infty} \inf(1+t) ^{\frac{p-1}{2-p}}\bigg\|x_n+t y_n-\| x_n+t y_n\| x_n\bigg\|}{t}.
	\end{aligned}$$
	Moreover, for each $n \geq \mathcal{N}_2$, we have $$\begin{aligned}\frac{\bigg\|x_n+ty_n-\|x_n+ty_n\|x_n\bigg\|}{t}&=\left\|\left(\left\|x_n+t y_n\right\|-1\right) \frac{1}{t} x_n-y_n\right\|\\& 
		\geq \left(f_n\left(x_n\right)+t f_n\left(y_n\right)-1\right) \frac{1}{t} g_n\left(x_n\right)-g_n\left(y_n\right) \\& 
		= \frac{1}{t}\left(f_n\left(x_n\right)-1\right) g_n\left(x_n\right)+\left[g_n\left(x_n\right) f_n-g_n\right]\left(y_n\right) \\ &
		= \frac{1}{t}\left(f_n\left(x_n\right)-1\right) g_n\left(x_n\right)+\eta.
	\end{aligned}$$
	Consequently, for any $t \in(0,1)$ and $\eta \in\left(1, \liminf _{n \rightarrow \infty}\left\|g_n\left(x_n\right) f_n-g_n\right\|\right)$, we have
	$$
	\mathcal{MR}_p(\mathcal{X}) \geq \liminf _{n \rightarrow \infty}(1+t)^{\frac{p-1}{2-p}}\left[\frac{1}{t}\left(f_n\left(x_n\right)-1\right) g_n\left(x_n\right)+\eta\right]=(1+t)^{\frac{p-1}{2-p}} \eta.
	$$
	Let $t \rightarrow 0$ and $\eta \rightarrow \liminf _{n \rightarrow \infty}\left\|g_n\left(x_n\right) f_n-g_n\right\|$, we obtain
	$$
	\mathcal{M R}_p(\mathcal{X}) \geq \liminf _{n \rightarrow \infty}\left\|g_n\left(x_n\right) f_n-g_n\right\|=\max \left\{\liminf _{n \rightarrow \infty}\left\|g_n\left(x_n\right) f_n-g_n\right\|, 1\right\}.
	$$
	Combining the above two cases, we have completed the proof of  $$
	\mathcal{MR}_p(\mathcal{X}) \geq \max \left\{\liminf _{n \rightarrow \infty}\left\|g_n\left(x_n\right) f_n-g_n\right\|, 1\right\}.$$
	On the other hand, in \cite{03}, the author has shown that $$\max \left\{\liminf _{n \rightarrow \infty}\left\|g_n\left(x_n\right) f_n-g_n\right\|, 1\right\} \geq \liminf _{n \rightarrow \infty} g_n\left(x_n\right)\left\|f_n-g_n\right\|,$$
	hence, we completed the whole proof.
	
\end{proof}
\begin{Theorem}\label{t2}
	Let $\mathcal{X}$ be a Banach space. Then, $$\mathcal{MR}_p(\mathcal{X}) \geq \max\{\epsilon_0(\mathcal{X}),1\}.$$
\end{Theorem}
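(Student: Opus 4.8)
The plan is to establish the two lower bounds $\mathcal{MR}_p(\mathcal{X})\ge 1$ and $\mathcal{MR}_p(\mathcal{X})\ge\epsilon_0(\mathcal{X})$ separately and then take the maximum. The first is exactly Proposition \ref{p1}, so we may assume $\epsilon_0(\mathcal{X})>0$ (otherwise nothing more is needed) and fix $\epsilon\in(0,\epsilon_0(\mathcal{X}))$; it suffices to prove $\mathcal{MR}_p(\mathcal{X})\ge\epsilon$ and then let $\epsilon\uparrow\epsilon_0(\mathcal{X})$. The engine will be Lemma \ref{l2}, which yields a lower bound for $\mathcal{MR}_p(\mathcal{X})$ out of a unit vector $x_n$, a functional $f_n$ with $f_n(x_n)\to 1$, and any functional $g_n$ with $g_n(x_n)$ bounded away from $0$.

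Since $\delta_{\mathcal{X}}(\epsilon)=0$, choose $x_n,y_n\in\mathcal{B}_{\mathcal{X}}$ with $\|x_n-y_n\|\ge\epsilon$ and $\|x_n+y_n\|\to 2$; this forces $\|x_n\|\to 1$ and $\|y_n\|\to 1$, so after normalizing and passing to a subsequence I may assume $x_n,y_n\in S_{\mathcal{X}}$, $\|x_n+y_n\|\to 2$, and $d_n:=\|x_n-y_n\|\to c$ with $c\ge\epsilon>0$. Now pick $f_n\in S_{\mathcal{X}^*}$ supporting $x_n+y_n$ and $g_n\in S_{\mathcal{X}^*}$ supporting $x_n-y_n$, i.e. $f_n(x_n+y_n)=\|x_n+y_n\|$ and $g_n(x_n-y_n)=d_n$. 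From $f_n(x_n)+f_n(y_n)=\|x_n+y_n\|\to 2$ together with $f_n(x_n),f_n(y_n)\le 1$ we get $f_n(x_n)\to 1$ and $f_n(y_n)\to 1$. From $g_n(x_n)-g_n(y_n)=d_n$ we get $\max\{g_n(x_n),-g_n(y_n)\}\ge d_n/2$; since interchanging $x_n$ with $y_n$ and replacing $g_n$ by $-g_n$ leaves $\|x_n\pm y_n\|$, the number $d_n$, and the two supporting conditions intact, a further subsequence lets me assume $g_n(x_n)\ge d_n/2$, whence $\liminf_n g_n(x_n)\ge c/2>0$.

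The triple $(x_n;f_n,g_n)$ now satisfies the hypotheses of Lemma \ref{l2}, so $\mathcal{MR}_p(\mathcal{X})\ge\liminf_n\|g_n(x_n)f_n-g_n\|$. Evaluating the functional $g_n(x_n)f_n-g_n$ at the unit vector $y_n$ and using $g_n(y_n)=g_n(x_n)-d_n$,
\[
\|g_n(x_n)f_n-g_n\|\ \ge\ \big(g_n(x_n)f_n-g_n\big)(y_n)\ =\ g_n(x_n)\big(f_n(y_n)-1\big)+d_n .
\]
Since $f_n(y_n)-1\to 0$ while $0\le g_n(x_n)\le 1$, the first term tends to $0$, so the right-hand side tends to $c$; hence $\mathcal{MR}_p(\mathcal{X})\ge c\ge\epsilon$. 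Letting $\epsilon\uparrow\epsilon_0(\mathcal{X})$ gives $\mathcal{MR}_p(\mathcal{X})\ge\epsilon_0(\mathcal{X})$, and combining with Proposition \ref{p1} yields $\mathcal{MR}_p(\mathcal{X})\ge\max\{\epsilon_0(\mathcal{X}),1\}$.

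The step I expect to cost the most care is making Lemma \ref{l2} applicable, i.e. securing a functional $g_n$ with $\liminf g_n(x_n)>0$ that still ``sees'' the near-non-squareness. Supporting functionals of $x_n$ or of $y_n$ alone carry too little information — they need not separate $x_n$ from $y_n$ — which is why one is forced to use the supporting functional of the \emph{difference} $x_n-y_n$; its sign on $x_n$ is not controlled a priori, and the observation $\max\{g_n(x_n),-g_n(y_n)\}\ge d_n/2$, exploiting the symmetric roles of $x_n$ and $y_n$, is precisely what restores the positivity Lemma \ref{l2} requires. Everything after that is a one-line estimate.
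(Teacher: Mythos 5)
Your proposal is correct and follows essentially the same route as the paper: both apply Lemma \ref{l2} to norming functionals of the sum $x_n+y_n$ and the difference $x_n-y_n$ of an almost-square pair, and both obtain the bound by evaluating $g_n(x_n)f_n-g_n$ at the second unit vector. The only (minor) divergence is how the hypothesis $\liminf_n g_n(x_n)>0$ of Lemma \ref{l2} is secured: the paper uses $\widetilde{g}_n(r_n)\ge \|r_n-s_n\|-1\to\epsilon_0(\mathcal{X})-1$, which is why it must split off the case $\epsilon_0(\mathcal{X})\le 1$, whereas your symmetrization argument ($\max\{g_n(x_n),-g_n(y_n)\}\ge d_n/2$ plus swapping roles) works for any $\epsilon_0(\mathcal{X})>0$ — a slightly more robust, but not materially different, treatment of that one step.
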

\begin{proof}
	Similarly, we consider two cases to prove this theorem.
	
	\textbf{Case i:} If $\epsilon_0(\mathcal{X})\leq 1$, then $\max\{\epsilon_0(\mathcal{X}),1\}=1,$ and
	from Proposition \ref{p1}, we know that $\mathcal{MR}_p(\mathcal{X})\geq1 $ is always valid. Thus, it follows that $$
	\mathcal{MR}_p(\mathcal{X}) \geq \max\{\epsilon_0(\mathcal{X}),1\}.$$
	
	\textbf{Case ii:} If $\epsilon_0(\mathcal{X})> 1$, then there exist two sequences $\{r_n\}_{n=1}^{\infty}$,$\{s_n\}_{n=1}^{\infty}\subseteq \mathcal{S}_\mathcal{X}$ such that $\|r_n-s_n\|\to\epsilon_0(\mathcal{X})$ and $\|r_n+s_n\|\to2.$ Thus, for each $n\geq 1$, according to Hahn-Banach theorem, there exist $ \widetilde{f}_n, \widetilde{g}_n \in \mathcal{S}_{\mathcal{X}^*}$ such that $\widetilde{f}_n\left(r_n+s_n\right)=\left\|r_n+s_n\right\|$ and $\widetilde{g}_n\left(r_n-s_n\right)=\left\|r_n-s_n\right\|$. Observe that
	$$
	\lim _{n \rightarrow \infty} \widetilde{f}_n\left(r_n\right)=\lim _{n \rightarrow \infty} \widetilde{f}_n\left(s_n\right)=1
	$$
	since
	$$
	\lim _{n \rightarrow \infty}\left(\widetilde{f}_n\left(r_n\right)+\widetilde{f}_n\left(s_n\right)\right)=\lim _{n \rightarrow \infty}\left\|r_n+s_n\right\|=2
	$$
	and $\left|\widetilde{f}_n\left(r_n\right)\right| \leq 1,\left|\widetilde{f}_n\left(s_n\right)\right| \leq 1$.
	In addition,
	$$
	\liminf _{n \rightarrow \infty} \widetilde{g}_n\left(r_n\right)=\liminf _{n \rightarrow \infty}\left(\left\|r_n-s_n\right\|+\widetilde{g}_n\left(s_n\right)\right) \geq\epsilon_0(\mathcal{X})-1>0
	$$
	and hence, by Lemma \ref{l2},
	$$
	\begin{aligned}
		\mathcal{MR}_p(\mathcal{X}) & \geq \liminf _{n \rightarrow \infty}\left\|\widetilde{g}_n\left(r_n\right) \widetilde{f}_n-\widetilde{g}_n\right\| \\
		& \geq  \liminf _{n \rightarrow \infty}\left(\widetilde{g}_n\left(r_n\right) \widetilde{f}_n\left(s_n\right)-\widetilde{g}_n\left(s_n\right)\right) \\
		& =\lim _{n \rightarrow \infty} \widetilde{g}_n\left(r_n-s_n\right) \\
		& = \lim _{n \rightarrow \infty}\left\|r_n-s_n\right\|= \epsilon_0(\mathcal{X}) .
	\end{aligned}
	$$
	This implies that  $$
	\mathcal{MR}_p(\mathcal{X}) \geq \max\{\epsilon_0(\mathcal{X}),1\}.$$
\end{proof}
\begin{Corollary}
	For any $0\leq p<1$, a Banach space $\mathcal{X}$ is uniformly non-square if and only if $$\mathcal{MR}_p(\mathcal{X})<2.$$
\end{Corollary}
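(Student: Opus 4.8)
The plan is to prove the two implications separately. The ``if'' direction is immediate: if $\mathcal{MR}_p(\mathcal{X})<2$, then by Theorem~\ref{t2} we have $\epsilon_0(\mathcal{X})\le\max\{\epsilon_0(\mathcal{X}),1\}\le\mathcal{MR}_p(\mathcal{X})<2$, and since a Banach space is uniformly non-square exactly when its characteristic of convexity is strictly less than $2$, it follows that $\mathcal{X}$ is uniformly non-square; nothing beyond Theorem~\ref{t2} and this classical equivalence is needed. The ``only if'' direction I would establish by contraposition, using the equivalent formula for $\mathcal{MR}_p(\mathcal{X})$ proved above together with a sequential compactness argument.

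So, assume $\mathcal{MR}_p(\mathcal{X})=2$ (it cannot exceed $2$ by Proposition~\ref{p1}); the goal is to produce sequences witnessing that $\mathcal{X}$ is not uniformly non-square. By the equivalent formula there are $x_1^{(n)},x_2^{(n)}\in S_\mathcal{X}$ and scalars $\lambda_n>0$, $\lambda_n\ne 1$, with
$$
\frac{\left\|\lambda_n x_1^{(n)}-\lambda_n^{1-p}x_2^{(n)}\right\|}{\left\|\lambda_n^{2-p}x_1^{(n)}-x_2^{(n)}\right\|}\longrightarrow 2 .
$$
The first step is to confine the multipliers to a compact subinterval of $(0,\infty)$: if $\lambda_n\ge 1$ the numerator is at most $2\lambda_n$ while the denominator is at least $\lambda_n^{2-p}-1$, so the quotient is at most $2\lambda_n/(\lambda_n^{2-p}-1)$, which tends to $0$ as $\lambda_n\to\infty$ because $p<1$; and if $\lambda_n\to 0$ then, again because $p<1$, the numerator is at most $\lambda_n+\lambda_n^{1-p}\to 0$ while the denominator is at least $1-\lambda_n^{2-p}\to 1$. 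Passing to a subsequence, $\lambda_n\to\lambda_0\in(0,\infty)$, and necessarily $\lambda_0\ne 1$ (otherwise numerator and denominator would have the same limit and the quotient would tend to $1$). Since all the vectors are unit, replacing $\lambda_n$ by $\lambda_0$ costs only an $o(1)$ error, so one is reduced to the asymptotic identity
$$
\left\|\lambda_0 x_1^{(n)}-\lambda_0^{1-p}x_2^{(n)}\right\|-2\left\|\lambda_0^{2-p}x_1^{(n)}-x_2^{(n)}\right\|\longrightarrow 0
$$
with $\lambda_0$ a fixed constant $\ne 1$. From here I would feed Hahn--Banach norming functionals for $\lambda_0^{2-p}x_1^{(n)}-x_2^{(n)}$ into the quotient, as in the proofs of Lemma~\ref{l2} and Theorem~\ref{t2}, and push the equality cases of the triangle inequality to force $x_1^{(n)},x_2^{(n)}$ into the extremal position $\|x_1^{(n)}+x_2^{(n)}\|\to 2$ and $\|x_1^{(n)}-x_2^{(n)}\|\to 2$, contradicting uniform non-squareness.

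The main obstacle is precisely this last rigidity step --- turning the scalar limit ``quotient $=2$'' into the geometric statement that two unit vectors are simultaneously almost parallel and almost antipodal; this is where the functional-analytic bookkeeping has to be done carefully, and where the hypothesis $p<1$ genuinely enters, through the two growth estimates above. A shorter but less uniform alternative for $0<p<1$ is to invoke Lemma~\ref{l1}, which already gives $\mathcal{MR}_p(\mathcal{X})\le\frac{4-3p}{2-p}<2$, and to dispose of the endpoint $p=0$ via the known characterization that $\mathcal{DR}(\mathcal{X})<2$ if and only if $\mathcal{X}$ is uniformly non-square, recorded in the Introduction (\cite{11}); I would nonetheless prefer the sequential argument, which covers the whole range $0\le p<1$ at once and keeps the proof self-contained.
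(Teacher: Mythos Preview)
Your ``if'' direction --- $\mathcal{MR}_p(\mathcal{X})<2\Rightarrow\epsilon_0(\mathcal{X})<2\Rightarrow$ uniformly non-square, via Theorem~\ref{t2} --- is exactly the paper's argument; in fact the paper's proof consists only of this, asserting the biconditional without supplying a separate argument for the converse. So you are already attempting more than the paper does. Your alternative route for the converse is sound and shorter than anything in the paper: for $0<p<1$, Lemma~\ref{l1} already gives $\mathcal{MR}_p(\mathcal{X})\le\frac{4-3p}{2-p}<2$ for \emph{every} Banach space (making that implication vacuous), and the endpoint $p=0$ is the result of \cite{11}.

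Your preferred sequential argument, however, has a genuine gap at the claim $\lambda_0\neq 1$. The parenthetical justification (``numerator and denominator would have the same limit and the quotient would tend to $1$'') fails precisely when both tend to zero, and that is exactly what happens along maximizing sequences. Concretely, for $p=0$ in $(\mathcal{R}^2,\|\cdot\|_1)$ take $x_1^{(n)}=(1,0)$, $x_2^{(n)}=(1-2\epsilon_n,2\epsilon_n)\in S_\mathcal{X}$ and $\lambda_n=1-\epsilon_n$: then $\lambda_n\|x_1^{(n)}-x_2^{(n)}\|=4\epsilon_n(1-\epsilon_n)$ and $\|\lambda_n^{2}x_1^{(n)}-x_2^{(n)}\|=\epsilon_n^2+2\epsilon_n$, so both vanish while their ratio tends to $2$. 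Once $\lambda_0=1$ is admitted, replacing $\lambda_n$ by $\lambda_0$ is no longer an $o(1)$ perturbation of the quotient (the denominator itself is $o(1)$), and the whole reduction collapses before the ``rigidity step'' is ever reached. Finally, notice the tension your own alternative exposes: if $\mathcal{MR}_p(\mathcal{X})<2$ holds for \emph{every} $\mathcal{X}$ when $0<p<1$, then combining this with Theorem~\ref{t2} would force every Banach space to be uniformly non-square --- so at least one of Lemma~\ref{l1}, Lemma~\ref{l2}, Theorem~\ref{t2} must be flawed as stated, and the Corollary itself is suspect for $0<p<1$.
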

\begin{proof}
	As shown above, a Banach space $\mathcal{X}$ is not uniformly non-square if and only if $\epsilon_0(\mathcal{X})=2,$ combine with Theorem \ref{t2}, we know that :
	
	A Banach space $\mathcal{X}$ is not uniformly non-square if and only if $\mathcal{MR}_p(\mathcal{X})=2,$ which means that $\mathcal{X}$ is  uniformly non-square if and only if $\mathcal{MR}_p(\mathcal{X})<2.$
\end{proof}
\begin{Theorem}\label{t3}
	Let $\mathcal{X}$ be a Banach space. Then, $$\mathcal{MR}_p(\mathcal{X}) \geq \max\bigg\{2\rho'_0(\mathcal{X}),1\bigg\}.$$
\end{Theorem}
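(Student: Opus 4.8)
The plan is to reproduce the scheme used in the proof of Theorem \ref{t2}, extracting the relevant sequences from the modulus of smoothness rather than from the characteristic of convexity, and then applying Lemma \ref{l2}. Since $\mathcal{MR}_p(\mathcal{X})\geq1$ always holds by Proposition \ref{p1}, it suffices to prove $\mathcal{MR}_p(\mathcal{X})\geq 2\rho'_0(\mathcal{X})$; I will establish this unconditionally (when $2\rho'_0(\mathcal{X})\leq1$ the statement is already contained in Proposition \ref{p1}, exactly as in the first case of the proof of Theorem \ref{t2}).

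First I would pick, for each $n$, a real number $\tau_n\in(0,1/n)$ with $\rho_{\mathcal{X}}(\tau_n)>\tau_n\big(\rho'_0(\mathcal{X})-\tfrac1n\big)$; such $\tau_n$ exists because $\rho_{\mathcal{X}}(\tau)/\tau\to\rho'_0(\mathcal{X})$ as $\tau\to0^{+}$, and clearly $\tau_n\to0$. By the definition of $\rho_{\mathcal{X}}(\tau_n)$ as a supremum, choose $x_n,y_n\in\mathcal{S}_{\mathcal{X}}$ with
$$\frac{\|x_n+\tau_n y_n\|+\|x_n-\tau_n y_n\|}{2}-1>\rho_{\mathcal{X}}(\tau_n)-\frac{\tau_n}{n}>\tau_n\Big(\rho'_0(\mathcal{X})-\frac{2}{n}\Big),$$
so that $\|x_n+\tau_n y_n\|+\|x_n-\tau_n y_n\|>2+2\tau_n\big(\rho'_0(\mathcal{X})-\tfrac2n\big)$. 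Next, by the Hahn--Banach theorem take $f_n,g_n\in\mathcal{S}_{\mathcal{X}^*}$ with $f_n(x_n+\tau_n y_n)=\|x_n+\tau_n y_n\|$ and $g_n(x_n-\tau_n y_n)=\|x_n-\tau_n y_n\|$. From $\|x_n\pm\tau_n y_n\|\geq1-\tau_n$ together with $|f_n(y_n)|\leq1$ and $|g_n(y_n)|\leq1$ one gets $1-2\tau_n\leq f_n(x_n)\leq1$ and $1-2\tau_n\leq g_n(x_n)\leq1$, hence $f_n(x_n)\to1$ and $g_n(x_n)\to1$. In particular $\{x_n\}\subseteq\mathcal{B}_{\mathcal{X}}$, $\{f_n\},\{g_n\}\subseteq\mathcal{B}_{\mathcal{X}^*}$, $\lim_n f_n(x_n)=1$ and $\liminf_n g_n(x_n)=1>0$, so Lemma \ref{l2} applies and gives
$$\mathcal{MR}_p(\mathcal{X})\geq\liminf_n g_n(x_n)\,\|f_n-g_n\|=\liminf_n\|f_n-g_n\|.$$

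It then remains to bound $\|f_n-g_n\|$ from below, which I would do by evaluating at $y_n$ and using the two support identities to express $f_n(y_n)$ and $g_n(y_n)$:
$$\begin{aligned}
\|f_n-g_n\|&\geq f_n(y_n)-g_n(y_n)=\frac{\|x_n+\tau_n y_n\|+\|x_n-\tau_n y_n\|-f_n(x_n)-g_n(x_n)}{\tau_n}\\
&\geq\frac{\|x_n+\tau_n y_n\|+\|x_n-\tau_n y_n\|-2}{\tau_n},
\end{aligned}$$
where the last inequality uses $f_n(x_n)\leq1$ and $g_n(x_n)\leq1$. Combining this with the choice of $x_n,y_n$ yields $\|f_n-g_n\|>2\rho'_0(\mathcal{X})-\tfrac4n$, hence $\liminf_n\|f_n-g_n\|\geq2\rho'_0(\mathcal{X})$, and the desired inequality follows.

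I expect the only genuinely delicate point to be the scaling $\tau_n\to0$: it is precisely this that forces $f_n(x_n)\to1$ and $g_n(x_n)\to1$, which is the hypothesis needed to invoke Lemma \ref{l2}; with a fixed $\tau>0$ one would only get $f_n(x_n)\geq1-2\tau$ and the lemma could not be applied. Everything else is the manipulation of support functionals already carried out in the proof of Theorem \ref{t2}, so no new difficulty is anticipated.
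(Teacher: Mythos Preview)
Your argument is correct, and it takes a genuinely different route from the paper's. The paper does not work with $\rho_{\mathcal{X}}$ directly: instead it reduces to the case $\mathcal{MR}_p(\mathcal{X})<2$ (hence $\mathcal{X}$ reflexive), invokes the Lindenstrauss duality $2\rho'_{\mathcal{X}}(0)=\epsilon_0(\mathcal{X}^*)$, picks $\epsilon$ with $\delta_{\mathcal{X}^*}(\epsilon)=0$ and sequences $\tilde f_n,\tilde g_n\in \mathcal{S}_{\mathcal{X}^*}$ with $\|\tilde f_n-\tilde g_n\|=\epsilon$, $\|\tilde f_n+\tilde g_n\|\to 2$, then uses $x_n\in \mathcal{S}_{\mathcal{X}}$ norming $\tilde f_n+\tilde g_n$ to feed Lemma~\ref{l2}. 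Your construction stays entirely in the primal space: you manufacture the near-norming condition $f_n(x_n)\to1$, $g_n(x_n)\to1$ out of $\tau_n\to0$ and the support identities for $x_n\pm\tau_n y_n$, and you read off $\|f_n-g_n\|\geq 2\rho'_0(\mathcal{X})-4/n$ by evaluating at $y_n$. This buys you a more self-contained proof that needs neither reflexivity nor the duality formula; the paper's version is a couple of lines shorter once those external facts are taken for granted.
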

\begin{proof}
	The inequality $\mathcal{MR}_p(\mathcal{X})\geq1$ always holds. We have then to prove that $\mathcal{MR}_p(\mathcal{X})\geq2\rho'_0(\mathcal{X}).$ If $\mathcal{MR}_p(\mathcal{X})=2$, the
	inequality is obvious, so we can assume $\mathcal{MR}_p(\mathcal{X})<2$ and then the reflexivity of $X.$

	Let $\epsilon\in[0,2]$ such that $\delta_\mathcal{X^*}(\epsilon)=0.$ For such $\epsilon$, according to Hahn-Banach theorem,  there exist two sequences $\{\widetilde{f}_n\}_{n=1}^{\infty}$, $\{\widetilde{g}_n\}_{n=1}^{\infty}\subseteq \mathcal{S}_{\mathcal{X^*}}$ such that $\|\widetilde{f}_n-\widetilde{g}_n\|=\epsilon$ for all $n\geq1$ and $\lim_{n\to{\infty}}\|\widetilde{f}_n+\widetilde{g}_n\|=2.$ 
	
	Consider, for each $n\geq1,~x_n\in \mathcal{S}_\mathcal{X}$ such that $(\widetilde{f}_n+\widetilde{g}_n)(x_n)=$ $\|\widetilde{f}_n+\widetilde{g}_n\|.$ It must be
	$$\lim\limits_{n\to\infty}\widetilde{f}_n(x_n)=\lim\limits_{n\to\infty}\widetilde{g}_n(x_n)=1.$$
	
	Thus, by  Lemma \ref{l2}
	$$\mathcal{MR}_p(\mathcal{X})\geq\lim_{n\to\infty}\widetilde{g}_n(x_n)\|\widetilde{f}_n-\widetilde{g}_n\|=\epsilon.$$
	We have proved that, for any $\epsilon\in[0,2]$ such that $\delta_{\mathcal{X}^*}(\epsilon)=0$, we have $\mathcal{MR}_p(\mathcal{X})\geq\epsilon.$ Therefore,
	$\mathcal{MR}_p(\mathcal{X})\geq \sup \{ \epsilon \in [ 0, 2] : \delta _{\mathcal{X}^{* }}( \epsilon ) = 0\} = 2\epsilon _{0}( \mathcal{X}^{* }) =2\rho' _{\mathcal{X}}( 0) .$
\end{proof}

Since both $\rho'_\mathcal{X}(0)<\frac{1}{\mu(\mathcal{X})}$ and $\rho'_\mathcal{X}(0)<\frac{\mathcal{M}(\mathcal{X})}{2}$ imply normal structure \cite{04}. From Theorem \ref{t3} , the following two results can be obtained: 
\begin{Corollary}
	(i)	Let $\mathcal{X}$ be a Banach space with $\mathcal{MR}_p(\mathcal{X})<\frac{2}{\mu(\mathcal{X})}$, then $\mathcal{X}$ has normal structure.
	
	(ii) Let $\mathcal{X}$ be a Banach space with $\mathcal{MR}_p(\mathcal{X})<\mathcal{M}(\mathcal{X})$, then $\mathcal{X}$ has normal structure.
\end{Corollary}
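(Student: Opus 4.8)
The plan is to obtain both parts as immediate consequences of Theorem \ref{t3} combined with the two sufficient conditions for normal structure quoted from \cite{04}. The only ingredient I need is the inequality furnished by Theorem \ref{t3}, namely
$$
\mathcal{MR}_p(\mathcal{X}) \geq \max\bigl\{2\rho'_{\mathcal{X}}(0),\,1\bigr\} \geq 2\rho'_{\mathcal{X}}(0),
$$
which rearranges to $\rho'_{\mathcal{X}}(0) \leq \tfrac{1}{2}\,\mathcal{MR}_p(\mathcal{X})$. Dropping the term $1$ from the maximum is harmless here, since only the upper estimate on $\rho'_{\mathcal{X}}(0)$ is used.

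For (i), I would assume $\mathcal{MR}_p(\mathcal{X}) < \tfrac{2}{\mu(\mathcal{X})}$ and chain it with the displayed inequality to get $\rho'_{\mathcal{X}}(0) \leq \tfrac{1}{2}\,\mathcal{MR}_p(\mathcal{X}) < \tfrac{1}{\mu(\mathcal{X})}$; the hypothesis $\rho'_{\mathcal{X}}(0) < \tfrac{1}{\mu(\mathcal{X})}$ from \cite{04} then yields that $\mathcal{X}$ has normal structure. For (ii), I would likewise assume $\mathcal{MR}_p(\mathcal{X}) < \mathcal{M}(\mathcal{X})$, deduce $\rho'_{\mathcal{X}}(0) \leq \tfrac{1}{2}\,\mathcal{MR}_p(\mathcal{X}) < \tfrac{\mathcal{M}(\mathcal{X})}{2}$, and invoke the second criterion of \cite{04}, namely that $\rho'_{\mathcal{X}}(0) < \tfrac{\mathcal{M}(\mathcal{X})}{2}$ forces normal structure.

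There is essentially no obstacle: the mathematical content is entirely contained in Theorem \ref{t3}, and the corollary is just a rescaling of that bound inserted into two known criteria. The only point requiring a word of care is the passage from $\max\{2\rho'_{\mathcal{X}}(0),1\}$ to the plain bound $2\rho'_{\mathcal{X}}(0)$, but this is immediate because $\max\{a,1\} \geq a$ always. I would therefore present the proof as two short lines, one for each part, each consisting of the substitution above followed by a citation of the corresponding implication in \cite{04}.
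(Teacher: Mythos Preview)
Your proposal is correct and matches the paper's own argument exactly: the paper states just before the corollary that both parts follow from Theorem~\ref{t3} together with the criteria $\rho'_{\mathcal{X}}(0)<\tfrac{1}{\mu(\mathcal{X})}$ and $\rho'_{\mathcal{X}}(0)<\tfrac{\mathcal{M}(\mathcal{X})}{2}$ of \cite{04}, and gives no further detail. Your write-up simply makes the one-line substitution explicit, which is all that is needed.
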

\subsection*{Acknowledgment}
Many thanks to our \TeX-pert for developing this class file.

\end{document}